\theoremstyle{plain}
\newtheorem{thm}{Theorem}[section]
\newtheorem{theorem}[thm]{Theorem}
\newtheorem{proposition}[thm]{Proposition}
\newtheorem{corollary}[thm]{Corollary}
\theoremstyle{definition}
\newtheorem{definition}[thm]{Definition}
\newtheorem{remark}[thm]{Remark}
\newtheorem{example}[thm]{Example}
\newtheorem{thevarthm}[thm]{\varthmname}
\newenvironment{varthm*}[1]{\trivlist\item[]{\bf #1.}\it}{\endtrivlist}
\def\keywordname{{\bfseries Keywords}}%
\def\keywords#1{\par\addvspace\medskipamount{\rightskip=0pt plus1cm
\def\and{\ifhmode\unskip\nobreak\fi\ $\cdot$
}\noindent\keywordname\enspace\ignorespaces#1\par}}
\def\subclassname{{\bfseries Mathematics Subject Classification
(2020)}\enspace}
\def\subclass#1{\par\addvspace\medskipamount{\rightskip=0pt plus1cm
\def\and{\ifhmode\unskip\nobreak\fi\ $\cdot$
}\noindent\subclassname\ignorespaces#1\par}}
\begin{document}
\title{On combinatorics of plus--one generated line arrangements}
\author{Artur Bromboszcz}
\date{\today}
\maketitle

\thispagestyle{empty}
\begin{abstract}
This paper focuses on the combinatorial properties of plus--one generated line arrangements. We provide combinatorial constraints on such arrangements and construct a polynomial with a shape similar to the well--known Poincar\'e polynomial that decodes the plus--one generatedness property. We demonstrate how to create new plus--one generated arrangements using classical Klein and Wiman reflection arrangements. Furthermore, among all known sporadic simplicial arrangements with up to $27$ lines, we identify nine minimal plus--one generated arrangements.
\keywords{minimal plus one generated curves, line arrangements, plane curve singularities}
\subclass{14N20, 32S22, 14C20}
\end{abstract}
\section{Introduction}
In this paper, we study line arrangements that are plus--one generated, focusing primarily on their combinatorial aspects. Abe recently introduced the concept of plus--one generated hyperplane arrangements in \cite{Abe18}, and these arrangements are closely related to free arrangements. Recall that an arrangement of hyperplanes is called {\it free} if its associated module of derivations is a free module over the polynomial ring. For plus--one generated arrangements, the associated module of derivations is no longer free. However, it admits a simple minimal free resolution. This is a natural step away from the freeness property, which is why the world of plus--one generated arrangements attracts a lot of attention from researchers nowadays. We should also mention that Dimca and Sticlaru generalized the notion of plus--one generated hyperplane arrangements to the setting of reduced plane curves, see \cite{DS0}. In this broader context, we have an analogous result to Abe's theorem on hyperplane arrangements, namely if $C \subset \mathbb{P}^{2}_{\mathbb{C}}$ is a reduced plane curve containing a line, say $\ell$, as an irreducible component, then assuming $C$ is free, the deletion $C' = C \setminus \ell$ is either free or plus--one generated, see \cite{POG, MP}. The main aim of this paper is to present combinatorial properties of plus--one generated line arrangements and to apply new techniques to construct examples of such arrangements. Let us present briefly an outline of our work. First of all, we provide some combinatorial constraints on plus--one generated line arrangements. In particular, we show that plus--one generated arrangements of $d>12$ lines have to have at least one intersection point of multiplicity at least $4$, see Corollary \ref{max}. Our main result of the paper is presented in Section $4$, where we construct a Poincar\'e-type polynomial based on ideas of Pokora presented in \cite{Pok}. More precisely, given an arrangement $\mathcal{L} : f=0$ of $d$ lines in $\mathbb{P}^{2}_{\mathbb{C}}$, we define
$$\mathcal{P}(\mathcal{L},d_{3};t) = 1 + dt + \bigg( \sum_{r\geq 2}(r-1)t_{r} - d_{3}\bigg)t^{2},$$
where $t_{r}$ denotes the number of $r$-fold intersection points and $d_{3}$ denotes the degree of the third syzygy of the Jacobian ideal $J_f$ with respect to the degree order. We show that it behaves in a similar way to the classical Poincar\'e polynomial with this significant change that it decodes the property of being plus--one generated. In this setting, Theorem \ref{npog} gives us a combinatorial criterion that allows us to decide if a given line arrangement cannot be plus--one generated. We present some examples that demonstrate the utility of this technique in Remark \ref{pp1} and \ref{pp2}. In the last part of the paper we show how to construct new examples of plus--one generated arrangements using certain classical reflection line arrangements and the deletion technique. For example, we show that the deletion of any line from the Klein arrangement of $21$ lines leads us to a minimal plus--one generated line arrangement, see Proposition \ref{klein}. Finally, we focus on sporadic simplicial line arrangements in $\mathbb{P}^{2}_{\mathbb{R}}$. Our main contribution is demonstrating that, of all the sporadic simplicial line arrangements with up to $27$ lines, exactly nine are minimal plus--one generated, see Theorem \ref{spog}. In the paper we performed symbolic computations using \verb}SINGULAR} \cite{Singular}.
\section{Preliminaries}
We will recall some basic statements from the theory of plane curves essential for the present note. Let $C$ be a reduced curve in $\mathbb{P}^{2}_{\mathbb{C}}$ of degree $d$ given by $f \in S :=\mathbb{C}[x,y,z]$. We denote by $J_{f}$ the Jacobian ideal, i.e., the ideal generated by the partial derivatives $\partial_{x}f, \, \partial_{y}f, \, \partial_{z}f$. We define $r:={\rm mdr}(f)$ to be the minimal degree of a syzygy relation among the partial derivatives, i.e., 
$${\rm mdr}(f):=\min\{r : \, {\rm AR}(f)_{r} \neq 0 \},$$
where ${\rm AR}(f) = \{(a,b,c)\in S^{\oplus 3} : af_x+bf_y+cf_z=0\}$.
Finally, we define the Milnor algebra as $M(f) = S / J_{f}$.
\begin{definition}
We say that a reduced plane curve $C$ is an $m$-syzygy curve when the associated Milnor algebra $M(f)$ has the following minimal graded free resolution
$$0 \rightarrow \bigoplus_{i=1}^{m-2}S(-e_{i}) \rightarrow \bigoplus_{i=1}^{m}S(1-d - d_{i}) \rightarrow S^{3}(1-d)\rightarrow S \rightarrow M(f) \rightarrow 0$$
with $e_{1} \leq e_{2} \leq \ldots \leq e_{m-2}$ and $1\leq d_{1} \leq \ldots \leq d_{m}$. The $m$-tuple $(d_{1}, \ldots , d_{m})$ is called the exponents of $C$.
\end{definition}

\begin{definition}
A reduced curve $C$ in $\mathbb{P}^{2}_{\mathbb{C}}$ is called \textbf{plus--one generated} with the exponents $(d_1,d_2,d_3)$ if $C$ is $3$-syzygy such that $d_{1}+d_{2}=d$. 
\end{definition}
In order to study plus--one generated line arrangements we will use the following characterization that comes from \cite{DS0}. Here by $\tau(C)$ we denote the total Tjurina number of a given reduced curve $C \subset \mathbb{P}^{2}_{\mathbb{C}}$, which is defined as the degree of the Jacobian ideal $J_{f}$, i.e., $\tau(C) = {\rm deg}(J_{f})$.
\begin{proposition}[Dimca-Sticlaru]
\label{dimspl}
Let $C: f=0$ be a reduced $3$-syzygy curve of degree $d\geq 3$ with the exponents $(d_{1},d_{2},d_{3})$. Then $C$ is plus--one generated if and only if
$$\tau(C) = (d-1)^{2} - d_{1}(d-d_{1}-1) - (d_{3}-d_{2}+1).$$
\end{proposition}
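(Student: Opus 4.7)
The plan is to extract $\tau(C)$ from the Hilbert series of the Milnor algebra encoded by the $3$-syzygy resolution, and then to compare the resulting expression with the right-hand side of the claimed identity.

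First, from the resolution
\[
0 \to S(-e_{1}) \to \bigoplus_{i=1}^{3} S(1-d-d_{i}) \to S^{3}(1-d) \to S \to M(f) \to 0
\]
I would read off the Hilbert series $H(M(f);t) = N(t)/(1-t)^{3}$, where
\[
N(t) = 1 - 3t^{d-1} + \sum_{i=1}^{3} t^{d-1+d_{i}} - t^{e_{1}}.
\]
Since $C$ is reduced with only isolated singularities, $J_{f}$ cuts out a zero-dimensional subscheme of $\mathbb{P}^{2}$ of degree $\tau(C)$, so the Hilbert function of $M(f)$ eventually stabilizes at $\tau(C)$. This forces $(1-t)^{2}\mid N(t)$: the identity $N(1)=0$ is automatic from the alternating rank count, while $N'(1)=0$ yields the universal relation $e_{1}=d_{1}+d_{2}+d_{3}$. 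Computing $\tau(C)=\tfrac{1}{2}N''(1)$ and substituting this value of $e_{1}$ collapses after expansion to
\[
\tau(C) = (d-1)(d_{1}+d_{2}+d_{3}) - (d_{1}d_{2}+d_{1}d_{3}+d_{2}d_{3}).
\]

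Next, I would carry out the elementary polynomial manipulation showing that
\[
\tau(C) - \Bigl[(d-1)^{2} - d_{1}(d-d_{1}-1) - (d_{3}-d_{2}+1)\Bigr] = (d_{1}+d_{2}-d)(d-2-d_{1}-d_{3}).
\]
One direction is then immediate: if $d_{1}+d_{2}=d$ the first factor vanishes and the equality holds.

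The converse requires ruling out the spurious case $d_{1}+d_{3}=d-2$. This is where the mild technical obstacle lies. Minimality of the resolution forces each nonzero entry $h_{i}$ of the last map $S(-e_{1})\to\bigoplus_{i=1}^{3}S(1-d-d_{i})$ to have degree $e_{1}-(d-1+d_{i})\geq 1$, and torsion-freeness of $\mathrm{AR}(f)\subset S^{3}(1-d)$ prevents more than one of the $h_{i}$ from vanishing (otherwise some generator of $\mathrm{AR}(f)$ would be a torsion element). A short case analysis on which $h_{i}$ can be zero, combined with the ordering $d_{1}\leq d_{2}\leq d_{3}$, yields $d_{1}+d_{3}\geq d$ in every admissible scenario, hence $d-2-d_{1}-d_{3}<0$. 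Therefore the displayed equality is equivalent to $d_{1}+d_{2}=d$, i.e.\ to $C$ being plus--one generated. I expect this minimality bookkeeping, together with the identification of the stabilized Hilbert function with $\tau(C)$, to be the only nontrivial ingredient; the rest of the argument is purely formal.
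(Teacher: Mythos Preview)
The paper does not actually prove this proposition: it is quoted as a result of Dimca--Sticlaru from \cite{DS0} and used as a black box throughout. So there is no ``paper's own proof'' to compare against.

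That said, your argument is correct and is essentially the standard route to this formula. The Hilbert-series computation is right: $N'(1)=0$ indeed forces $e_{1}=d_{1}+d_{2}+d_{3}$, and $\tau(C)=\tfrac{1}{2}N''(1)$ gives the general $3$-syzygy identity
\[
\tau(C)=(d-1)(d_{1}+d_{2}+d_{3})-(d_{1}d_{2}+d_{1}d_{3}+d_{2}d_{3}),
\]
which is exactly the formula Dimca--Sticlaru record. I checked your factorization
\[
\tau(C)-\bigl[(d-1)^{2}-d_{1}(d-d_{1}-1)-(d_{3}-d_{2}+1)\bigr]=(d_{1}+d_{2}-d)(d-2-d_{1}-d_{3})
\]
and it is correct. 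For the converse, your torsion-freeness argument is the right idea; to make it airtight, phrase it as follows: if two of the $h_{i}$ vanish (say $h_{2}=h_{3}=0$), then the cokernel of the last map splits off a summand $S(1-d-d_{1})/(h_{1})$, which is nonzero torsion unless $h_{1}$ is a unit, contradicting minimality. Hence at most one $h_{i}$ vanishes, and in every case the nonvanishing of $h_{3}$ or $h_{2}$ forces $d_{1}+d_{2}\geq d$ or $d_{1}+d_{3}\geq d$ respectively; either way $d_{1}+d_{3}\geq d$, so the second factor is strictly negative and the equivalence follows.
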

\noindent
Now we will define minimal plus--one generated curves that have been introduced in \cite{DS24}.
\begin{definition}
A plus--one generated plane curve $C$ satisfying
\begin{itemize}
    \item $d_3=d_2$ is called \textbf{nearly free},
    \item $d_3=d_2+1$ is called \textbf{minimal plus one--generated}.
\end{itemize}
\end{definition}

To check if a reduced curve is a minimal plus--one generated, we can use the following characterization by Dimca and Sticlaru.
\begin{theorem}[{\cite[Theorem 1.5]{DS24}}]
\label{MPOG}
Let $C \, : f=0$ be a reduced plane curve of degree $d$ in $\mathbb{P}^{2}_{\mathbb{C}}$ with $r = {\rm mdr}(f) \leq d/2$. Then $C$ is a minimal plus--one generated curve if and only if
$$r^{2} - r(d-1) + (d-1)^2 = \tau(C)+2.$$
\end{theorem}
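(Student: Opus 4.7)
The plan is to use Proposition~\ref{dimspl} as the central tool, together with the standard fact that for any $3$-syzygy curve with $r = \mathrm{mdr}(f) \leq d/2$, the smallest exponent $d_{1}$ coincides with $r$. The equivalence splits into two directions, of which the forward one is essentially bookkeeping and the reverse carries the mathematical content.

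For the forward direction, assume $C$ is minimal plus--one generated with exponents $(d_{1}, d_{2}, d_{3})$. The defining conditions give $d_{1} + d_{2} = d$ and $d_{3} = d_{2} + 1$, so $d_{3} - d_{2} + 1 = 2$. Combined with $d_{1} = r$ and $d_{2} = d - r$, substitution into Proposition~\ref{dimspl} yields
$$\tau(C) = (d-1)^{2} - r(d - r - 1) - 2.$$
The identity $(d-1)^{2} - r(d-r-1) = r^{2} - r(d-1) + (d-1)^{2}$ then produces the claimed equation.

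For the reverse direction, suppose the displayed equation holds with $r \leq d/2$. The plan is to invoke the du Plessis--Wall freeness bound $\tau(C) \leq (d-1)(d-r-1) + r^{2}$, which is valid in this range and attains equality if and only if $C$ is free. Our hypothesis places $\tau(C)$ exactly two units below this maximum, immediately ruling out freeness; a nearly free curve in the sense of this paper satisfies $\tau(C) = (d-1)(d-r-1) + r^{2} - 1$, so this case is ruled out as well. It then remains to classify the syzygy structure of $C$ at deficit exactly $2$, and to show that the only possibility is a $3$-syzygy resolution with exponents $(d_{1}, d_{2}, d_{3}) = (r,\, d-r,\, d-r+1)$, i.e., minimal plus--one generated.

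The main obstacle is precisely this last classification step. While the forward direction is pure algebra, the reverse requires a structural statement about how the deficit of $\tau(C)$ against the freeness bound constrains the minimal free resolution of the Milnor algebra $M(f)$. Establishing that a deficit of exactly $2$ forces the exponent triple $(r, d-r, d-r+1)$, thereby ruling out higher-$m$ syzygy configurations and other $3$-syzygy shapes in which $d_{1} + d_{2} \neq d$, draws on Hilbert--function arguments from \cite{DS0, DS24} rather than direct manipulation of Proposition~\ref{dimspl}.
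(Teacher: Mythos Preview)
The paper does not prove this theorem at all; it is quoted verbatim as \cite[Theorem 1.5]{DS24} and used as a black box. So there is nothing in the paper to compare your argument against.

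On the merits of your proposal itself: the forward direction is correct and complete. Once $d_{1}=r$, $d_{2}=d-r$, $d_{3}=d_{2}+1$, Proposition~\ref{dimspl} gives the equation immediately, and your algebraic identity $(d-1)^{2}-r(d-r-1)=r^{2}-r(d-1)+(d-1)^{2}$ is right.

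The reverse direction, however, is not a proof but a proof sketch with an acknowledged gap. You correctly observe that the hypothesis puts $\tau(C)$ at deficit exactly $2$ below the du~Plessis--Wall bound and that this rules out the free and nearly free cases. But the crucial step---showing that deficit $2$ forces $C$ to be $3$-syzygy with exponents exactly $(r,\,d-r,\,d-r+1)$, rather than, say, a $4$-syzygy curve or a $3$-syzygy curve with $d_{1}+d_{2}\neq d$---is deferred entirely to ``Hilbert--function arguments from \cite{DS0, DS24}.'' That is precisely the content of the theorem you are trying to prove, so as written the argument is circular or at best a reduction to the original source. If your intent is a self-contained proof, you would need to actually carry out the Hilbert-series computation relating the defect $\nu(C)$ to the shape of the minimal resolution; if your intent is only to explain why the statement is plausible, you should say so explicitly rather than frame it as a proof.
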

\noindent
As we will see in a moment, this criterion is very handy and easy to use in practice.
\section{On combinatorics of plus--one generated line arrangements}
We present our results on the combinatorial properties of plus--one generated line arrangements. We decided to focus on this aspect because the results reveal new insights into the connections between combinatorics and the homological aspects of plus--one generatedness.

Our first result is about the maximal multiplicity of plus--one generated line arrangements.
\begin{proposition}
\label{mm}
    Let $\mathcal{L} \subset \mathbb{P}^{2}_{\mathbb{C}}$ be a plus--one generated arrangement of $d$ lines. Then
    $$m(\mathcal{L})\geq \left\lceil \frac{4d}{d+4} \right\rceil,$$
    where $m(\mathcal{L})$ is the maximal multiplicity among intersection points in $\mathcal{L}$
\end{proposition}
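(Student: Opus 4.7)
The plan is to combine the Dimca--Sticlaru characterization (Proposition \ref{dimspl}) with an elementary counting inequality, then exploit the structural constraints on the plus-one generated exponents.

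Let $(d_1, d_2, d_3)$ be the exponents of the plus-one generated arrangement. By Proposition \ref{dimspl},
$$\tau(\mathcal{L}) = (d-1)^2 - d_1(d-d_1-1) - (d_3 - d_2 + 1).$$
Substituting $d_2 = d - d_1$ and rearranging, this rewrites as $\tau = d(d-1) - d_1 d_2 - d_3$, equivalently $\sum_{s \geq 2}(s-1) t_s = d_1 d_2 + d_3$. Next, from the standard identity $d(d-1) = \sum_{s \geq 2} s(s-1) t_s$ combined with $s \leq m$ one has $d(d-1) \leq m \sum (s-1) t_s$, so that
$$m(d_1 d_2 + d_3) \geq d(d-1). \qquad (\ast)$$

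To convert $(\ast)$ into $m \geq \lceil 4d/(d+4)\rceil$, I would use the constraints $1 \leq d_1 \leq d_2 \leq d_3$ and $d_1 + d_2 = d$. In the extremal nearly-free configuration $d_3 = d_2 = d - d_1$, the quantity $d_1 d_2 + d_3 = (d-d_1)(d_1+1)$ is a concave quadratic in $d_1$ whose maximum on $[1, d/2]$ is $(d+1)^2/4$, attained at $d_1 = (d-1)/2$. Plugging into $(\ast)$ gives $m \geq 4d(d-1)/(d+1)^2$, and the inequality $(d-1)(d+4) \geq (d+1)^2$ (i.e., $d \geq 5$) then turns this into $m \geq 4d/(d+4)$. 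The residual cases $d \leq 4$ are immediate, since there $\lceil 4d/(d+4) \rceil \leq 2$ and any non-pencil arrangement with at least two lines has $m \geq 2$.

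The main obstacle is the non-nearly-free case $d_3 > d_2$, where $d_1 d_2 + d_3$ can exceed $(d+1)^2/4$ and $(\ast)$ alone yields only a weaker conclusion. To close this gap one would need an auxiliary upper bound on $d_3$; a plausible source is the classical fact that a point of multiplicity $m$ produces a derivation of degree $d - m$, which forces $d_1 = {\rm mdr}(f) \leq d - m$ and, via the plus-one generated resolution shape, restricts how large $d_3$ can be. Making this control explicit is the technical heart of the argument.
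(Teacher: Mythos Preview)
Your derivation of $(\ast)$ is correct and in fact reproduces Theorem~\ref{comeq} of the paper. But the gap you flag in the non--nearly--free case is genuine and cannot be closed along the lines you sketch. To turn $(\ast)$ into $m\geq 4d/(d+4)$ you would need $(d-1)(d+4)\geq 4(d_1d_2+d_3)$; when $d_1d_2$ is near its maximum $d^2/4$ this forces $d_3\leq (3d-4)/4$, whereas the only general a~priori bound available is $d_3\leq d-2$, which is strictly weaker for every $d\geq 5$. The observation $d_1\leq d-m$ you mention is true, but it only says $m\leq d_2$; the plus--one generated resolution shape does not convert this into a useful upper bound on $d_3$.

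The paper's argument avoids $(\ast)$ and the Tjurina computation entirely. It invokes Dimca's \emph{lower} bound on the minimal degree of a Jacobian syzygy,
\[
d_1={\rm mdr}(f)\;\geq\;\frac{2d}{m(\mathcal{L})}-2
\]
(\cite[Theorem 2.1]{Dimca2}), and simply chains it with the elementary fact $d_1\leq d/2$ coming from $d_1+d_2=d$, $d_1\leq d_2$. This yields $d/2\geq 2d/m(\mathcal{L})-2$, which rearranges directly to $m(\mathcal{L})\geq 4d/(d+4)$; no information about $d_3$ is needed at all. The missing ingredient in your approach is precisely this lower bound on $d_1$ in terms of $m(\mathcal{L})$, which goes in the opposite direction to the derivation--of--degree--$d-m$ fact you cite.
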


\begin{proof}
If $\mathcal{L}$ is plus--one generated with $d$ lines with exponent $
(d_{1}, d_{2}, d_{3})$, then $d\geq 2d_1$, and this follows from the fact that $d_{1}+d_{2}=d$ and $d_{1} \leq d_{2}$. Moreover, using \cite[Theorem 2.1]{Dimca2}, we get $$\frac{d}{2} \geq d_1 \geq \frac{2}{m(\mathcal{L})}\cdot d-2.$$
After simple manipulations, we arrive at
$m(\mathcal{L})\geq\frac{4d}{d+4}$.
Since $m(\mathcal{L})\in\mathbb{N}$, we obtain 
$$m(\mathcal{L})\geq\left\lceil\frac{4d}{d+4}\right\rceil.$$
\end{proof}
\begin{corollary}
\label{max}
In the setting of Proposition \ref{mm}, if $\mathcal{L} \subset \mathbb{P}^{2}_{\mathbb{C}}$ is a plus--one generated arrangement of $d>12$ lines, then $m(\mathcal{L}) \geq 4$.
\end{corollary}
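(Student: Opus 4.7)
The plan is to deduce the corollary directly from Proposition \ref{mm} by a short inequality chase on the lower bound $\lceil 4d/(d+4)\rceil$.

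First, I would apply Proposition \ref{mm} to get $m(\mathcal{L})\geq \lceil 4d/(d+4)\rceil$. Since $m(\mathcal{L})$ is an integer, it suffices to show that $4d/(d+4) > 3$ whenever $d>12$, because then the ceiling is at least $4$. The inequality $4d/(d+4)>3$ is equivalent, after clearing the positive denominator $d+4$, to $4d>3d+12$, i.e.\ to $d>12$, which is exactly the hypothesis.

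Thus for $d>12$ we obtain $4d/(d+4)>3$ and therefore $\lceil 4d/(d+4)\rceil\geq 4$, giving $m(\mathcal{L})\geq 4$ as required. There is no real obstacle here; the only subtlety is noting that $4d/(d+4)<4$ for all $d$, so the ceiling lands exactly on $4$ in the regime $d>12$ (as long as $d$ is not large enough to make a difference, which it never is). The content is entirely in Proposition \ref{mm}, and the corollary is a purely arithmetic consequence.
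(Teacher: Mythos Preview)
Your proof is correct and matches the paper's approach: the corollary is stated there without an explicit proof, as an immediate arithmetic consequence of Proposition~\ref{mm}, and your inequality chase $4d/(d+4)>3 \iff d>12$ is exactly the intended verification.
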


The following combinatorial identity will play a crucial role in the forthcoming section.
\begin{thm}
\label{comeq}
     Let $\mathcal{L} \subset \mathbb{P}^{2}_{\mathbb{C}}$ be a plus--one generated arrangement of $d$ lines with the exponents $(d_1, d_2, d_3)$. Then
     $$\sum_{r\geq2}(r-1)\cdot t_r=d_1 d_2+d_3,$$
where $t_r = t_{r}(\mathcal{L})$ is the number of $r$-fold intersection points of $\mathcal{L}$.
\end{thm}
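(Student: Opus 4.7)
My plan is to extract the identity from two independent expressions for the total Tjurina number $\tau(\mathcal{L})$, combined with the elementary count of pairs of lines.

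First, I would use Proposition~\ref{dimspl}. Since $\mathcal{L}$ is plus--one generated, $d_{1}+d_{2}=d$, so $d-d_{1}-1=d_{2}-1$. Substituting this into
$$\tau(\mathcal{L}) = (d-1)^{2} - d_{1}(d-d_{1}-1) - (d_{3}-d_{2}+1)$$
and regrouping $(d-1)^{2}+(d_{1}+d_{2})-1 = (d-1)^{2}+(d-1) = d(d-1)$, I expect the formula to collapse to
$$\tau(\mathcal{L}) = d(d-1) - d_{1}d_{2} - d_{3}.$$
This reduction is the only place where the plus--one generated hypothesis enters the argument.

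Second, I would use two standard combinatorial identities for a line arrangement. Because every singular point of $\mathcal{L}$ is an ordinary $r$-fold point with local Tjurina number $(r-1)^{2}$, we have
$$\tau(\mathcal{L}) = \sum_{r\geq 2}(r-1)^{2}\, t_{r},$$
and double-counting unordered pairs of lines yields
$$d(d-1) = \sum_{r\geq 2} r(r-1)\, t_{r}.$$
Subtracting term by term and using the telescoping identity $r(r-1)-(r-1)^{2} = r-1$, I obtain
$$d(d-1) - \tau(\mathcal{L}) = \sum_{r\geq 2}(r-1)\, t_{r}.$$

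Finally, equating the two computations of $d(d-1)-\tau(\mathcal{L})$ gives $\sum_{r\geq 2}(r-1)\,t_{r} = d_{1}d_{2}+d_{3}$, which is the claim. I do not anticipate any real obstacle: the nontrivial ingredient is Proposition~\ref{dimspl}, and the remaining steps are algebraic bookkeeping together with facts about ordinary singularities that are entirely standard for line arrangements.
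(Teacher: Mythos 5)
Your proposal is correct and follows essentially the same route as the paper: both combine the quasi-homogeneity formula $\tau(\mathcal{L})=\sum_{r\geq 2}(r-1)^2 t_r$, the pair count $d(d-1)=\sum_{r\geq 2}r(r-1)t_r$, and Proposition \ref{dimspl}, differing only in when the substitution $d=d_1+d_2$ is performed. The algebra checks out.
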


\begin{proof}
    Recall that if $\mathcal{L}$ is line arrangement, then all singularities are quasi--homogeneous and $$\tau(\mathcal{L})=\sum_{r\geq 2}{(r-1)^2\cdot t_r},$$
    where $\tau(\mathcal{L})$ is the total Tjurina number.
    Combining above formula with the naive count, that 
    $$d^2-d=\sum_{r\geq 2}{(r^2-r)\cdot t_r},$$
    we obtain
    $$\tau(\mathcal{L})=\sum_{r\geq 2}(r^2-2r+1)\cdot t_r=\sum_{r\geq 2}(r^2-r)\cdot t_r-\sum_{r\geq 2}(r-1)\cdot t_r=d^2-d-\sum_{r\geq 2}(r-1)\cdot t_r.$$
    Since $\mathcal{L}$ is plus--one generated, we can use Proposition 2.3, and this gives us
    $$\sum_{r\geq 2}(r-1)\cdot t_r=d+d_1d-d_1^2-d_1+d_3-d_2.$$
    The assumption of plus--one generatedness implies $d=d_1+d_2$, and this finally allows us to conclude that 
    $$\sum_{r\geq 2}(r-1)\cdot t_r=d_1+d_2+d_1^2+d_1d_2-d_1^2-d_1+d_3-d_2=d_1d_2+d_3,$$
    which completes the proof.

\end{proof}
\begin{remark}
Using \cite[Corollary 3.5]{Sch}, let us observe that for plus--one generated line arrangements, one has $d_{3}\leq d-2 = d_{1}+d_{2}-2$, so we get
$$\sum_{r\geq 2}(r-1)\cdot t_r=d_1d_2+d_3 \leq (d_{1}+1)(d_{2}+1)-3.$$
\end{remark}
The next result provides a non--trivial lower bound on the number of intersections of plus--one-generated line arrangements using only the homological language of exponents.
\begin{proposition}
Let $\mathcal{L}$ be a plus--one generated arrangement of $d\geq 6$ lines with exponents $(d_{1},d_{2},d_{3})$. Then
$$\sum_{r\geq 2} t_{r} \geq \bigg\lceil\frac{1}{3}(d_{1}d_{2} + d_{1}+d_{2}+d_{3})\bigg\rceil.$$
\end{proposition}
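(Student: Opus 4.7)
The plan is to reduce the desired inequality to Hirzebruch's classical inequality for complex line arrangements and then finish with a term--by--term comparison of coefficients.

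First, using Theorem \ref{comeq} together with the identity $d = d_1+d_2$, the target (with the ceiling postponed to the end by integrality of $\sum t_r$) is equivalent to $3\sum_{r\geq 2} t_r - \sum_{r\geq 2}(r-1) t_r \geq d$; since $\sum_{r\geq 2} r\, t_r = \sum_{r\geq 2}(r-1) t_r + \sum_{r\geq 2} t_r$, this rearranges to
$$\sum_{r\geq 2}(4-r)\, t_r \;\geq\; d. \qquad(\star)$$
So the entire task is to prove $(\star)$.

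Next, I would check that Hirzebruch's inequality applies to $\mathcal{L}$. The remark following Theorem \ref{comeq} gives $d_3 \leq d-2$, while the exponent ordering $d_3 \geq d_2 = d-d_1$ forces $d_1 \geq 2$. Consequently $\mathcal{L}$ is neither a pencil nor a near--pencil, so $t_d = t_{d-1} = 0$, and together with $d \geq 6$ this allows the invocation of Hirzebruch's inequality
$$t_2 + \tfrac{3}{4}\, t_3 \;\geq\; d + \sum_{r\geq 5}(2r-9)\, t_r.$$

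The final step is a coefficient--by--coefficient comparison. For each $r \geq 2$, the coefficient $(4-r)$ appearing in $(\star)$ dominates the corresponding coefficient of $t_r$ on the left of Hirzebruch's inequality: the cases $r \in \{2,3,4\}$ give $2 \geq 1$, $1 \geq \tfrac{3}{4}$, $0 \geq 0$, while for $r \geq 5$ the condition $4-r \geq -(2r-9)$ simplifies to $r \geq 5$. Since all $t_r \geq 0$, termwise dominance yields
$$\sum_{r\geq 2}(4-r)\, t_r \;\geq\; t_2 + \tfrac{3}{4}\, t_3 - \sum_{r\geq 5}(2r-9)\, t_r \;\geq\; d,$$
which is exactly $(\star)$. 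The ceiling in the statement is then forced by $\sum_{r\geq 2} t_r \in \mathbb{N}$. I do not anticipate a serious obstacle here; the only mildly delicate point is checking that the hypotheses of Hirzebruch are fulfilled, i.e.\ that $t_d = t_{d-1} = 0$, which is handled cleanly by the remark after Theorem \ref{comeq}.
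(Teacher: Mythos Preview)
Your proof is correct and follows essentially the same route as the paper: reduce the target to a linear inequality in the $t_r$ via Theorem~\ref{comeq} and $d=d_1+d_2$, then invoke Hirzebruch's inequality after checking $t_d=t_{d-1}=0$. The only cosmetic differences are that the paper uses the weaker Hirzebruch form $t_2+t_3\ge d+\sum_{r\ge 4}(r-4)t_r$ (which already implies your $(\star)$ directly, since $(\star)$ is this inequality plus $t_2\ge 0$) and justifies $t_d=t_{d-1}=0$ by noting that such arrangements are supersolvable hence free, rather than via the bound $d_3\le d-2$.
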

\begin{proof}
Since $\mathcal{L}$ is an arrangement of $d\geq 6$ lines with $t_d=t_{d-1}=0$, then by Hirzebruch's inequality \cite{Hirzebruch} one has $$t_{2} + t_{3} \geq d + \sum_{r\geq 4}(r-4)t_{r}.$$
Observe that we can freely use the above inequality since arrangements with $t_{d}\neq 0$ or $t_{d-1}\neq 0$ are automatically free as they are supersolvable line arrangements, see for instance \cite{DS}. Using Theorem 3.3, and the assumption that $d=d_1+d_2$ for plus--one generated arrangements, we have
$$3  \sum_{r\geq 2}t_{r} \geq d+t_{2} + \sum_{r\geq 2}(r-1)t_{r}=d_1d_2+d_1+d_2+d_3+t_2,$$
which gives us
$$\sum_{r\geq 2}{t_r} \geq \frac{1}{3}\left( d_{1}d_{2}+d_{1}+d_{2}+d_{3}\right).$$
As $\sum_{r\geq 2}{t_r}$ is a positive integer, we can take the ceiling, which completes the proof.


\end{proof}
Now we focus on the defect of plus--one generated line arrangements. We should point out here that the defect of a reduced plane curve is an invariant that measures the discrepancy from the property of being free. Roughly speaking, free plane curves have defect equal to $0$, and in the case of plus--one generated plane curves $C$ the defect $\nu(C)$ is equal to $d_{3}-d_{2}+1$. Our goal now is to present a straightforward proof of this fact within the context of line arrangements, building upon the combinatorial results obtained earlier. We will need the following result by Dimca.
\begin{thm}[{\cite[Theorem 1.2]{Dimca1}}]
\label{deff}
Let $C \, : f=0$ be a reduced plane curve of degree $d$ and $d_{1}= {\rm mdr}(C)$. Then the following hold.
\begin{enumerate}
    \item[a)] If $d_{1} < d/2$, then $\nu(C) = (d-1)^{2}-d_{1}(d-1-d_{1})-\tau(C)$.
    \item[b)] If $d_{1} \geq d/2$, then
    $$\nu(C) = \bigg\lceil\frac{3}{4}(d-1)^{2}\bigg\rceil - \tau(C).$$
\end{enumerate}
\end{thm}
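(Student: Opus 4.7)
The plan is to express $\nu(C)$ as the gap between $\tau(C)$ and an optimal upper bound on the Tjurina number coming from du Plessis--Wall, treating the two regimes $d_{1} < d/2$ and $d_{1} \geq d/2$ separately. The common tool is the graded Jacobian module $N(f) = I_f/J_f$, where $I_f$ denotes the saturation of $J_f$ at the irrelevant maximal ideal: $\nu(C)$ is read off from the Hilbert function of $N(f)$ at a pivotal degree and vanishes precisely when $C$ is free, so matching its dimension against the bound-versus-$\tau$ gap is the whole game.

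For part (a), I would invoke the du Plessis--Wall inequality
\[
\tau(C) \;\leq\; (d-1)^{2} - d_{1}(d-1-d_{1}),
\]
which is valid when $d_{1} \leq (d-1)/2$ and holds with equality precisely when $C$ is free with exponents $(d_{1}, d-1-d_{1})$. Expanding the Hilbert polynomial of $M(f)$ from the minimal graded free resolution provided by the $m$-syzygy definition, and subtracting off the polynomial one would obtain if $C$ were free with exponents $(d_{1}, d-1-d_{1})$, leaves an error concentrated in the pivotal degree whose dimension equals $(d-1)^{2} - d_{1}(d-1-d_{1}) - \tau(C)$. By the identification of $\nu(C)$ with this dimension, formula (a) follows.

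For part (b) the bound above is unavailable, because $d_{1} \geq d/2$ would force $d-1-d_{1} < d_{1}$, violating the ordering $d_{1} \leq d_{2}$ in a length-two free resolution. The remedy is to optimize in $d_{1}$: the quadratic $d_{1} \mapsto d_{1}(d-1-d_{1})$ attains its maximum near $d_{1}=(d-1)/2$ at $\lfloor (d-1)^{2}/4 \rfloor$, converting the du Plessis--Wall inequality into the universal bound
\[
\tau(C) \;\leq\; \bigl\lceil \tfrac{3}{4}(d-1)^{2} \bigr\rceil
\]
valid for every reduced plane curve. The same Hilbert-function comparison, now run against the rounded midpoint resolution, identifies $\nu(C)$ with this new gap, yielding (b).

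The main obstacle is tracking the Hilbert-function combinatorics precisely enough to obtain an exact integer identity rather than merely an inequality: in particular, the ceiling in (b) reflects the parity of $d-1$, and one must verify that the comparison resolution really does saturate the universal bound in both parities. The rest reduces to the self-duality of $N(f)$ and elementary arithmetic.
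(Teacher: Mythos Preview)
The paper does not prove this theorem at all: it is quoted verbatim from \cite[Theorem 1.2]{Dimca1} and used purely as a black box to compute $\nu(\mathcal{L})$ in the subsequent proposition. There is therefore no ``paper's own proof'' to compare your attempt against.

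That said, your strategy is the right one in outline---the defect $\nu(C)$ really is defined as $\max_{k}\dim N(f)_{k}$ for $N(f)=I_{f}/J_{f}\cong H^{0}_{\mathfrak m}(M(f))$, and Dimca's proof does proceed by computing this maximum via the Hilbert function of $M(f)$ together with the du Plessis--Wall bound---but your sketch blurs the actual mechanism. The key identity is $\dim M(f)_{k}=\dim M(f_{s})_{k}+\dim N(f)_{k}$ for a smooth degree-$d$ form $f_{s}$, combined with the known unimodality and self-duality of the Hilbert function of $N(f)$, so that the maximum of $\dim N(f)_{k}$ is attained at (or symmetrically around) $k=\lfloor 3(d-2)/2\rfloor$; evaluating $\dim M(f)_{k}$ there and subtracting the known value for $f_{s}$ yields exactly the formulas in (a) and (b). Your ``comparison against a free/midpoint resolution'' is a heuristic stand-in for this computation, and the phrase ``run against the rounded midpoint resolution'' does not correspond to an actual step: in the regime $d_{1}\geq d/2$ there need not be any free curve realizing the bound $\lceil 3(d-1)^{2}/4\rceil$, so one cannot literally compare resolutions---one must evaluate the Hilbert function directly.
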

We are now in a position to prove the announced result.
\begin{proposition}
Let $\mathcal{L} \subset \mathbb{P}^{2}_{\mathbb{C}}$ be a plus--one generated arrangement of $d\geq 4$ lines with exponents $(d_{1},d_{2},d_{3})$, then $\nu(\mathcal{L}) = d_{3}-d_{2}+1$.
\end{proposition}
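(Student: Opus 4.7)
The plan is to reduce the claim directly to the defect formula of Dimca (Theorem~\ref{deff}) combined with the Dimca--Sticlaru characterization of the Tjurina number of a plus--one generated curve (Proposition~\ref{dimspl}). The starting observation is that the defining conditions $d_1+d_2=d$ and $d_1\leq d_2$ of a plus--one generated arrangement force $d_1\leq d/2$, so exactly one of the two branches of Theorem~\ref{deff} applies, and I split the argument accordingly.

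In the generic range $d_1<d/2$, Theorem~\ref{deff}(a) expresses $\nu(\mathcal{L})$ as $(d-1)^{2}-d_{1}(d-1-d_{1})-\tau(\mathcal{L})$. Substituting the value of $\tau(\mathcal{L})$ supplied by Proposition~\ref{dimspl} causes the $(d-1)^{2}$ and $d_{1}(d-1-d_{1})$ contributions to cancel identically, and what remains is precisely $d_3-d_2+1$.

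The boundary case $d_1=d/2$ forces $d$ to be even and $d_1=d_2=d/2$. Here Theorem~\ref{deff}(b) gives $\nu(\mathcal{L})=\lceil 3(d-1)^{2}/4\rceil-\tau(\mathcal{L})$, and substituting $\tau(\mathcal{L})$ from Proposition~\ref{dimspl} with $d_1=d_2=d/2$ reduces the claim to the purely numerical identity
$$\left\lceil \frac{3(d-1)^{2}}{4} \right\rceil - (d-1)^{2} + \frac{d(d-2)}{4} = 0.$$
Since $d$ is even, $(d-1)^{2}\equiv 1\pmod 4$, so the ceiling evaluates to $(3(d-1)^{2}+1)/4$, after which the identity follows by a one--line arithmetic check using $(d-1)^{2}-1=d(d-2)$.

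The only non--routine point is the ceiling bookkeeping in the boundary case $d_1=d/2$; everywhere else the proof is a mechanical cancellation between the two formulas. Note that Theorem~\ref{comeq} is not needed for this argument, although it could be used as an alternative route to evaluate $\tau(\mathcal{L})$ via the combinatorial identity $\sum_{r\geq 2}(r-1)t_r = d_1d_2+d_3$.
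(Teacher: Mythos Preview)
Your proof is correct and follows the same overall structure as the paper's: split into the two cases $d_1<d/2$ and $d_1=d/2$ dictated by Theorem~\ref{deff}, substitute the appropriate expression for $\tau(\mathcal{L})$, and simplify (including the ceiling computation in the boundary case).

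The one genuine difference is the source of the Tjurina formula. You invoke Proposition~\ref{dimspl} directly, so that $\tau(\mathcal{L}) = (d-1)^{2} - d_{1}(d-1-d_{1}) - (d_{3}-d_{2}+1)$ cancels term-by-term against the defect formula in case~(a). The paper instead routes through Theorem~\ref{comeq}, writing $\tau(\mathcal{L}) = d^{2}-d-\sum_{r\geq 2}(r-1)t_{r} = d^{2}-d-d_{1}d_{2}-d_{3}$ and then expanding. Since Theorem~\ref{comeq} is itself derived from Proposition~\ref{dimspl}, your route is strictly shorter; the paper's version has the advantage of tying the defect computation back to the combinatorial identity that is the paper's running theme. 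Your closing remark that Theorem~\ref{comeq} is unnecessary here is accurate.
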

\begin{proof}
Depending on $d_1$, we have to consider two cases. First, let us consider the case when $d_1 < d/2$. In this case, we can apply Theorem \ref{deff} a) directly, which tells us that the defect of $\mathcal{L}$ is equal to
$$\nu(\mathcal{L}) = (d-1)^{2}-d_{1}(d-1-d_{1})-\tau(\mathcal{L}).$$
Observe that in the proof of Theorem \ref{comeq} we obtained 
$$\tau(\mathcal{L}) = d^{2}-d - \sum_{r\geq 2}(r-1)t_{r},$$
and since $\sum_{r\geq 2}(r-1)t_{r} = d_{1}d_{2}+d_{3}$ we arrive at
\begin{equation}
\tau(\mathcal{L}) = d^{2}-d - d_{1}d_{2} - d_{3} = d^{2}-d - d_{1}(d-d_{1})-d_{3}.
\end{equation}
When we plug this formula into the formula for the defect, we get
$$\nu(\mathcal{L}) =  (d-1)^{2}-d_{1}(d-1-d_{1}) -\bigg(d^{2}-d - d_{1}(d-d_{1})-d_{3}\bigg) = d_{3}+d_{1}-d+1 = d_{3}-d_{2}+1.$$
For the second part of the proof, assume that $2d_{1}=d$. Then the defect of $\mathcal{L}$ is equal to 
$$\nu(C) = \bigg\lceil\frac{3}{4}(d-1)^{2}\bigg\rceil - \tau(C) = 3d_{1}^{2}-3d_{1} + 1 - (4d_{1}^2-2d_{1}-d_{1}^2 - d_{3}) = d_{3}-d_{1}+1.$$
Since $2d_{1} = d$ implies that $d_{1}=d_{2}$, we have
$$\nu(C) = d_{3}-d_{2}+1,$$
which completes the proof.
\end{proof}
\section{Combinatorial polynomials attached to line arrangements}
This section is the main contribution of our paper. We present an idea for attaching a combinatorial object to each $m$-syzygy line arrangement that behaves similarly to the Poincar\'e polynomial. Pokora recently has introduced this idea of constructing Poincar\'e-type polynomials, for instance, in the setting of conic-line arrangements with ordinary singularities \cite{Pok}, and we will use his idea in the setting of plus--one generated line arrangements. 

We will need the following two facts concerning a plus--one generated arrangement of $d$ lines with exponents $(d_{1}, d_{2}, d_{3})$, namely
\begin{enumerate}
    \item[a)] $d_{1}+d_{2}=d$, and
    \item[b)] $\sum_{r \geq 2}(r-1)t_{r} = d_{1}d_{2}+d_{3}.$
\end{enumerate}
For an $m$-syzygy arrangement $\mathcal{L}$ of $d$ lines and the exponents $(d_{1}, d_{2}, d_{3}, \ldots, d_{m})$, let us define the following polynomial:
\begin{equation}
\label{combp}
\mathcal{P}(\mathcal{L},d_{3};t) = 1 + dt + \bigg(\sum_{r\geq 2}(r-1)t_{r}-d_{3}\bigg)t^{2}.    
\end{equation}
It is worth noting that the polynomial depends on both the weak-combinatorics of $\mathcal{L}$ and the third exponent $d_{3}$, which, in general, \textbf{is not combinatorially determined}, as exponents of curves are not combinatorially determined, see \cite{CuntzPokora}.
Here is our first crucial observation.
\begin{proposition}
\label{cpog}
If $\mathcal{L}$ is a plus--one generated arrangement of $d$ lines with the exponents $(d_{1}, d_{2}, d_{3})$, then the polynomial $\mathcal{P}(\mathcal{L},d_{3};t)$ defined by \eqref{combp} splits over the rationals, and we have
$$\mathcal{P}(\mathcal{L},d_{3};t) = (1+d_{1}t)(1+d_{2}t).$$
\end{proposition}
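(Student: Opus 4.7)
The proof is essentially a direct substitution, using the two facts (a) and (b) listed just before the statement; fact (b) is Theorem \ref{comeq} already proved, and fact (a) is built into the definition of plus--one generated. So the plan is to compute the coefficients of $\mathcal{P}(\mathcal{L},d_3;t)$ one at a time and recognize the factorization.

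First I would look at the coefficient of $t$, which is $d$ by definition of the polynomial. Since $\mathcal{L}$ is plus--one generated, property (a) gives $d = d_1 + d_2$, so the linear coefficient already matches the $(1+d_1 t)(1+d_2 t)$ target.

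Next I would handle the coefficient of $t^2$, which is $\sum_{r\geq 2}(r-1)t_r - d_3$. By Theorem \ref{comeq}, $\sum_{r\geq 2}(r-1)t_r = d_1 d_2 + d_3$, so the $d_3$ terms cancel and the coefficient collapses to $d_1 d_2$. Combined with the constant term $1$ and the linear coefficient $d_1 + d_2$, we obtain
$$\mathcal{P}(\mathcal{L}, d_3; t) = 1 + (d_1 + d_2)t + d_1 d_2 \, t^2 = (1 + d_1 t)(1 + d_2 t),$$
which is the desired splitting over $\mathbb{Q}$.

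There is no real obstacle here: the whole content of the proposition is packaged into Theorem \ref{comeq}. The only thing to be careful about is that the theorem is applied correctly with the right sign conventions, and that one notes the factorization is automatic once both Vieta-type identities $d_1+d_2 = d$ and $d_1 d_2 = \sum_{r\geq 2}(r-1)t_r - d_3$ are in hand.
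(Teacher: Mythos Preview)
Your proposal is correct and mirrors the paper's own proof almost verbatim: both substitute $d=d_1+d_2$ and $\sum_{r\geq 2}(r-1)t_r - d_3 = d_1 d_2$ (the latter from Theorem \ref{comeq}) into the definition of $\mathcal{P}(\mathcal{L},d_3;t)$ and read off the factorization $(1+d_1 t)(1+d_2 t)$.
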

\begin{proof}
By the assumption that $\mathcal{L}$ is plus--one generated, we can use the above mentioned properties $d_{1}+d_{2}=d$ and $\sum_{r\geq 2}(r-1)t_{r}-d_{3} = d_{1}d_{2}$, and hence
$$\mathcal{P}(\mathcal{L},d_{3};t) = 1 + (d_{1}+d_{2})t + d_{1}d_{2}t^{2} = (1+d_{1}t)(1+d_{2}t),$$
which completes the proof.
\end{proof}
We are going to use the above considerations to present a technique that can serve as a handy, combinatorial, non--plus--one generatedness criterion. To achieve this, we modify equation \eqref{combp} so that it becomes a purely combinatorial expression.
\noindent
Recall that if $\mathcal{L}$ is an $m$-syzygy arrangement of lines, then $d_m\leq d-2$, by \cite[Corollary 3.5]{Sch}. Moreover, using \cite[Theorem 2.1]{Dimca2}, we get
$$\frac{2d}{m(\mathcal{L})}-2 \leq d_{1} \leq d_{2} \leq d_{3} \leq \ldots \leq d_{m}\leq d-2,$$
and hence we have 
$$d_{3} \in \left\{ \bigg\lceil \frac{2d}{m(\mathcal{L})}-2 \bigg\rceil, \ldots , d-2\right\}.$$
The above constraints on $d_{3}$ are obviously purely combinatorial, and this observation allows us to construct the following combinatorial criterion.
\begin{theorem}[Non--plus--one generatedness criterion]
\label{npog}
Let $\mathcal{L}$ be an arrangement of $d$ lines that is not free. Define 
$$\mathcal{P}(\mathcal{L},h;t) = 1 + dt + \bigg(\sum_{r\geq 2}(r-1)t_{r}-h\bigg)t^{2}$$
where $h$ is a positive integer. If, for every $h \in \left\{ \lceil 2d / m(\mathcal{L}) -2 \rceil, \ldots , d-2\right\}$, polynomial $\mathcal{P}(\mathcal{L},h;t)$ does not split over the rationals with respect to the variable $t$, then $\mathcal{L}$ cannot be plus--one generated.
\end{theorem}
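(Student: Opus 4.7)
The plan is to prove the theorem by contraposition: assume that $\mathcal{L}$ is plus--one generated and exhibit a value of $h$ in the prescribed range $\{\lceil 2d/m(\mathcal{L}) - 2 \rceil, \ldots, d-2\}$ for which $\mathcal{P}(\mathcal{L}, h; t)$ does split over $\mathbb{Q}$. The natural candidate is $h = d_3$, since Proposition \ref{cpog} already gives us the factorisation
$$\mathcal{P}(\mathcal{L}, d_3; t) = (1 + d_1 t)(1 + d_2 t),$$
which has integer (hence rational) roots $-1/d_1$ and $-1/d_2$. So the entire proof reduces to showing that the third exponent $d_3$ of a plus--one generated line arrangement necessarily lies in the allowed index set.

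For the upper bound, I would invoke Schenck's estimate \cite[Corollary 3.5]{Sch}, which asserts that for an $m$-syzygy line arrangement the last exponent satisfies $d_m \leq d - 2$; in our plus--one generated (i.e.\ $3$-syzygy) setting this yields $d_3 \leq d - 2$. For the lower bound, I would combine Dimca's inequality \cite[Theorem 2.1]{Dimca2}, namely $d_1 \geq 2d/m(\mathcal{L}) - 2$, with the ordering $d_1 \leq d_2 \leq d_3$ built into the definition of the exponents, to conclude $d_3 \geq 2d/m(\mathcal{L}) - 2$. Taking the ceiling is then free since $d_3 \in \mathbb{Z}$.

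Putting the two bounds together, $d_3$ lies in $\{\lceil 2d/m(\mathcal{L}) - 2\rceil, \ldots, d-2\}$, and setting $h = d_3$ contradicts the hypothesis that $\mathcal{P}(\mathcal{L}, h; t)$ fails to factor over $\mathbb{Q}$ for every $h$ in that set. The non--freeness assumption on $\mathcal{L}$ plays only a cosmetic role: it ensures that the minimal free resolution has length at least three so that a third exponent $d_3$ is actually present to be bounded (free arrangements are never plus--one generated anyway, so excluding them costs nothing). I do not anticipate a genuine obstacle here; the argument is essentially a bookkeeping exercise that packages Proposition \ref{cpog} together with the two classical exponent inequalities into a usable combinatorial test.
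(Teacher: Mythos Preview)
Your argument is correct and matches the paper's approach exactly: the paper establishes the bounds $\lceil 2d/m(\mathcal{L}) - 2\rceil \leq d_3 \leq d-2$ (via \cite[Theorem 2.1]{Dimca2} and \cite[Corollary 3.5]{Sch}) in the discussion immediately preceding the theorem, and then the proof itself is the single line ``It follows directly from Proposition~\ref{cpog},'' i.e.\ the contrapositive you spelled out.
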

\begin{proof}
It follows directly from Proposition \ref{cpog}.
\end{proof}
Now we show how to use this result in practice. 
\begin{example}
Let $\mathcal{L}$ be an arrangement of $d=5$ lines and $t_{2}=10$. Now we need to find the range for $h$, and we can check that $h=3$ since $\frac{2d}{m(\mathcal{L})}-2 =\frac{10}{2}-2 = 3$ and $d-2=3$. Taking this into account, our combinatorial polynomial has the following form
$$\mathcal{P}(\mathcal{L},3;t) = 1 + 5t + 7t^{2},$$
and it does not split over the rationals and hence the arrangement cannot be plus--one generated. 
\end{example}
\begin{remark}
Obviously, our criterion is not perfect, as it requires that our polynomial not split over the entire range of admissible values of the third exponent $d_{3}$. However, this approach makes our tool purely combinatorial, independent of homological properties of line arrangements, which is the key asset of our approach.
\end{remark}
\section{Plus--one generated line arrangements constructed by a deletion technique}
In this section, we will demonstrate how to construct somehow non-obvious examples of plus--one generated line arrangements starting from the free ones. Let us recall that a general result by Abe \cite{Abe18} tells us that if $\mathcal{A} \subset \mathbb{P}^{2}$ is a free line arrangement, then the deletion arrangement $\mathcal{A}'$ obtained by removing one line $\ell \in \mathcal{A}$ is either free or plus--one generated. Using this idea, we will show that deletion arrangements obtained by removing one line from the Klein arrangement, the Wiman arrangements, and the dual Hesse arrangement are plus--one generated. For combinatorial descriptions of these arrangements, please consult \cite{bnc}. 
\begin{proposition}
\label{klein}
Let $\mathcal{K} \subset \mathbb{P}^{2}_{\mathbb{C}}$ be the Klein arrangement of $21$ lines and let $\ell \in \mathcal{K}$ be any line. Then the arrangement $\mathcal{K}' :=\mathcal{K} \setminus \{\ell\}$ is minimal plus--one generated with exponents $(9,11,12)$.
\end{proposition}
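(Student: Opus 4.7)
Since $\mathrm{PSL}(2,7)$ acts transitively on the $21$ lines of $\mathcal{K}$, the isomorphism class of $\mathcal{K}' = \mathcal{K} \setminus \{\ell\}$ is independent of the choice of $\ell$, so I fix one. From $t_3(\mathcal{K}) = 28$ and $t_4(\mathcal{K}) = 21$, combined with the symmetry, a double-count gives that every line of $\mathcal{K}$ carries exactly four triple and four quadruple points. Deleting $\ell$ therefore produces $\mathcal{K}'$ with $d = 20$ lines and weak combinatorics $t_2 = 4,\, t_3 = 28,\, t_4 = 17$, yielding $\tau(\mathcal{K}') = 4 + 4 \cdot 28 + 9 \cdot 17 = 269$.

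Because $\mathcal{K}$ is free with exponents $(9,11)$, Abe's deletion theorem forces $\mathcal{K}'$ to be free or plus--one generated. Freeness is excluded at once: integer exponents $(d_1, d_2)$ satisfying $d_1 + d_2 = 19$ and $d_1 d_2 = (d-1)^2 - \tau = 92$ do not exist. I then appeal to Theorem \ref{MPOG}: solving $r^2 - 19 r + 361 = \tau + 2 = 271$ yields $r \in \{9, 10\}$, both $\leq d/2$. Once $r = \mathrm{mdr}(f_{\mathcal{K}'}) = 9$ is established, Theorem \ref{MPOG} gives minimal POG with $d_1 = 9$, hence $d_2 = 11$, and Theorem \ref{comeq} produces $d_3 = \sum_{r \geq 2}(r-1)t_r - d_1 d_2 = 111 - 99 = 12$, completing the exponent triple.

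The crux is pinning down $\mathrm{mdr}(f_{\mathcal{K}'}) = 9$, since the combinatorial bounds developed earlier in the paper yield only $\mathrm{mdr} \geq 8$. The inequality $\mathrm{mdr}(f_{\mathcal{K}'}) \leq 9$ I would obtain by transferring a Jacobian syzygy $(A, B, C)$ of $f_{\mathcal{K}}$ of degree $9$ (present because the smaller free exponent of $\mathcal{K}$ equals $9$): writing $f_{\mathcal{K}} = \ell \cdot f_{\mathcal{K}'}$ and expanding the syzygy identity shows that $\ell$ divides $A\ell_x + B\ell_y + C\ell_z$, say $A\ell_x + B\ell_y + C\ell_z = \ell M$ for some degree-$8$ polynomial $M$; Euler's identity for $f_{\mathcal{K}'}$ then delivers the nonzero triple $(20A + Mx,\, 20B + My,\, 20C + Mz) \in \mathrm{AR}(f_{\mathcal{K}'})_9$. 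The only remaining alternative $\mathrm{mdr}(f_{\mathcal{K}'}) = 8$, which would yield the non--minimal POG exponents $(8, 12, 15)$, I would exclude by a direct SINGULAR computation on an explicit projective equation of $\mathcal{K}$, matching the symbolic methodology indicated in the introduction.
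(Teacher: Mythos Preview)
Your argument is correct, but it takes a longer and partly computational route where the paper gives a fully theoretical one. The ingredient you are not exploiting is that Dimca's deletion result \cite[Theorem~1.3]{POG} does more than yield the free/plus--one generated dichotomy: when the deletion $\mathcal{K}'$ of a line $\ell$ from a free curve with exponents $(d_1,d_2)$ is plus--one generated, the theorem already prescribes the exponents as $d_1'=d_1$, $d_2'=d_2$, and $d_3'=\deg(\mathcal{K}')-|\mathcal{K}'\cap\ell|$. The paper therefore only needs to rule out freeness, which it does via the same theorem: $\mathcal{K}'$ is free if and only if $r:=|\mathcal{K}'\cap\ell|\ge d_1'+1$, and since $m(\mathcal{K}')=4$ forces $d_1'\ge \tfrac{2\cdot 20}{4}-2=8$ while $r=8$, one reaches the contradiction $8\ge 9$. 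The exponents $(9,11,12)$ and the minimality $d_3'=d_2'+1$ then drop out immediately, with no appeal to \texttt{SINGULAR}.

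Your path---the Diophantine exclusion of freeness, the syzygy-transfer bound $\mathrm{mdr}(\mathcal{K}')\le 9$, and Theorem~\ref{MPOG}---is valid, and your transfer argument is a nice self-contained substitute for part of the cited theorem. But it leaves the possibility $\mathrm{mdr}(\mathcal{K}')=8$ (i.e.\ exponents $(8,12,15)$) open, and you close it only by computation. Invoking the full strength of \cite[Theorem~1.3]{POG} makes that computation unnecessary.
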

\begin{proof}
Let us recall that $\mathcal{K}$ is a free arrangement with exponents $(d_{1},d_{2}) = (9,11)$, as it is a reflection arrangement, see \cite[Chapter 6]{OT92}. Moreover, this arrangement admits $t_{4}(\mathcal{K})=21$ and $t_{3}(\mathcal{K})=28$, and on each line from the arrangement $\mathcal{K}$ there are exactly $4$ triple and $4$ quadruple points. Consider now $\mathcal{K}' = \mathcal{K} \setminus \{\ell\}$ with $\ell \in \mathcal{K}$ being any line. As we have already mentioned, $\mathcal{K}'$ is either free or plus--one generated. 
For this purpose, let us denote by $r = |\mathcal{K}' \cap \ell|$ and we denote by $\epsilon = \epsilon(\mathcal{K}',\ell)$ the invariant of the pair that depends on the fact whether or not the singularities are quasi-homogeneous. Since for line arrangements all singularities are quasi-homogeneous we have $\epsilon = 0$. According to \cite[Theorem 1.3 (3)]{POG}, $\mathcal{K}'$ is free if and only if $r \geq d_{1}' + 1 - \epsilon$, where $d_{1}' = {\rm mdr}(\mathcal{K}')$. Our problem boils down to checking whether $r \geq d_{1}' + 1 - \epsilon = d_{1}'+1$.
Let us estimate $d_{1}'$. Since $\mathcal{K}'$ admits only double, triple and quadruple points as intersections, then by \cite[Theorem 2.1]{Dimca1} we have
$$d_{1}' \geq \frac{1}{2}\cdot {\rm deg}(\mathcal{K}')-2 = 8,$$
which implies that
$$8 = r \geq d_{1}' + 1 \geq 8+1 = 9,$$
a contradiction. This means that $\mathcal{K}'$ is plus--one generated with exponents $(d_{1}',d_{2}',d_{3}')$, where
$$d_{1} =d_{1}'=9, d_{2} = d_{2}'=11, d_{3}' = {\rm deg}(\mathcal{K}')-r = 20 - 8 = 12.$$
Since $d_{3}'=d_{2}'+1$, this means that $\mathcal{K}'$ is minimal plus--one generated.
\end{proof}
\begin{remark}
Naturally, we wonder if we can construct new plus--one generated arrangements by applying different deletion strategies to the Klein arrangement. Consider the following deletion: remove one quadruple point and all lines passing through it. Since ${\rm PSL}(2,7)$ acts on the set of lines and all intersection points, we can test our strategy on any quadruple point. We denote the resulting deletion arrangement by $\mathcal{K}''$. We can verify that
$$t_{2}(\mathcal{K}'') = 16, \quad t_{3}(\mathcal{K}'') = 24, \quad t_{4}(\mathcal{K}'')=8.$$
Then
$$\mathcal{P}(\mathcal{K}'',h;t) = 1 + 17t + (88-h)t^{2}$$
with $h \in \{7, \ldots, 15\}$. We can easily check that for every admissible $h$ polynomial $\mathcal{P}(\mathcal{K}'',h;t)$ does not split over the rationals and hence $\mathcal{K}''$ cannot be plus--one generated.
\end{remark}
\begin{proposition}
Let $\mathcal{W} \subset \mathbb{P}^{2}_{\mathbb{C}}$ be the Wiman arrangement of $45$ lines, and let $\ell \in \mathcal{W}$ be any line. Then the arrangement $\mathcal{W}' := \mathcal{W} \setminus \{\ell\}$ is plus--one generated with exponents $(19,25,28)$.
\end{proposition}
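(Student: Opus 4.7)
The plan is to transplant the argument of Proposition \ref{klein} to the Wiman setting. First I would recall that $\mathcal{W}$, as the reflection arrangement coming from the Valentiner group (a triple cover of $A_{6}$), is free with exponents $(d_{1}, d_{2}) = (19, 25)$ by \cite[Chapter 6]{OT92}. Its weak combinatorics is the classical $t_{3}(\mathcal{W}) = 120$, $t_{4}(\mathcal{W}) = 45$, $t_{5}(\mathcal{W}) = 36$ with all other $t_{r}$ vanishing, which is consistent with the naive identity $3 \cdot 120 + 6 \cdot 45 + 10 \cdot 36 = 990 = \binom{45}{2}$.

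Next, using the fact that the Valentiner group acts transitively on the $45$ lines, every line of $\mathcal{W}$ carries the same number of $k$-fold points for each $k$. A quick incidence double-count produces $3 \cdot 120/45 = 8$ triple, $4 \cdot 45/45 = 4$ quadruple, and $5 \cdot 36/45 = 4$ quintuple points on each line, so for any $\ell \in \mathcal{W}$ one has $r = |\mathcal{W}' \cap \ell| = 16$. By Abe's theorem \cite{Abe18}, $\mathcal{W}'$ is either free or plus--one generated, and in the plus--one generated case the first two exponents are inherited from the free $\mathcal{W}$, so $(d_{1}', d_{2}') = (19, 25)$. To rule out freeness I would argue by contradiction: freeness would, via \cite[Theorem 1.3(3)]{POG} together with $\epsilon(\mathcal{W}', \ell) = 0$, force $r \geq {\rm mdr}(\mathcal{W}') + 1$; but the quintuple points of $\mathcal{W}$ off $\ell$ remain quintuple in $\mathcal{W}'$, so $m(\mathcal{W}') = 5$, and \cite[Theorem 2.1]{Dimca2} yields ${\rm mdr}(\mathcal{W}') \geq \frac{2 \cdot 44}{5} - 2 = \frac{78}{5}$, i.e.\ ${\rm mdr}(\mathcal{W}') \geq 16$. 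This would give $16 = r \geq 17$, a contradiction.

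Finally, as in Proposition \ref{klein}, the remaining exponent is $d_{3}' = \deg(\mathcal{W}') - r = 44 - 16 = 28$, matching the announced triple $(19, 25, 28)$. A sanity check via Theorem \ref{comeq} with the refreshed combinatorics $t_{2}(\mathcal{W}') = 8$, $t_{3}(\mathcal{W}') = 116$, $t_{4}(\mathcal{W}') = 45$, $t_{5}(\mathcal{W}') = 32$ gives $\sum_{r \geq 2}(r-1)t_{r} = 8 + 232 + 135 + 128 = 503 = 19 \cdot 25 + 28$, confirming the triple. The main obstacle I anticipate is nothing structurally new compared to Proposition \ref{klein}: the reasoning is entirely parallel, but the numerics are tight, since the lower bound ${\rm mdr}(\mathcal{W}') \geq 16$ is just barely enough to contradict $r = 16$, and the transitivity claim together with the intersection data for $\mathcal{W}$ must be referenced cleanly (e.g.\ via \cite{bnc}).
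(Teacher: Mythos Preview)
Your argument is correct and mirrors the paper's proof essentially step for step: freeness of $\mathcal{W}$ with exponents $(19,25)$, sixteen singular points on each line giving $r=16$, the lower bound ${\rm mdr}(\mathcal{W}')\geq 16$ from \cite{Dimca2} contradicting freeness via \cite[Theorem 1.3(3)]{POG}, and $d_{3}'=44-16=28$. Your value $t_{4}(\mathcal{W})=45$ is the correct one (the paper prints $t_{4}=28$, a typo that plays no role in the actual argument), and your closing consistency check through Theorem \ref{comeq} is a nice addition not present in the paper.
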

\begin{proof}
Let us recall that $\mathcal{W}$ is a free arrangement with exponents $(d_{1},d_{2}) = (19,25)$, as it is a reflection arrangement, see \cite[Chapter 6]{OT92}. Moreover, this arrangement has $t_{3}(\mathcal{W})=120$, $t_{4}(\mathcal{W})=28$, $t_{5}(\mathcal{W})=36$, and on each line from the arrangement $\mathcal{W}$ there are exactly $8$ triple, $4$ quadruple points and $4$ quintuple points. Consider $\mathcal{W}' = \mathcal{W} \setminus \{\ell\}$, where $\ell \in \mathcal{W}$ is any line. We are going to use the same trick as above, namely by \cite[Theorem 1.3 (3)]{POG}, $\mathcal{W}'$ is free if and only if $r \geq d_{1}' + 1$, where $d_{1}' = {\rm mdr}(\mathcal{W}')$ and $r = |\mathcal{W}' \cap \ell|$.
Since $\mathcal{W}'$ admits only double, triple, quadruple and quintuple points as intersections, then by \cite[Theorem 2.1]{Dimca1} we have
$$d_{1}' \geq \bigg\lceil\frac{2}{5}\cdot {\rm deg}(\mathcal{W}')-2 \bigg\rceil = 16,$$
which implies that
$$16 = r \geq d_{1}' + 1 \geq 16 +1 = 17,$$
a contradiction. This means that $\mathcal{W}'$ is plus--one generated with exponents $(d_{1}',d_{2}',d_{3}')$, where
$$d_{1} =d_{1}'=19, d_{2} = d_{2}'=25, d_{3}' = {\rm deg}(\mathcal{W}')-r = 44 - 16 = 28.$$
\end{proof}

\begin{proposition}
Let $\mathcal{H} \subset \mathbb{P}^{2}_{\mathbb{C}}$ be the dual Hesse arrangement of $9$ lines and let $\ell \in \mathcal{H}$ be any line. Then the arrangement $\mathcal{H}' := \mathcal{H} \setminus \{\ell\}$ is plus--one generated with exponents $(4,4,4)$, hence $\mathcal{H}'$ is nearly free.
\end{proposition}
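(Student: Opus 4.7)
The plan is to mimic almost verbatim the arguments used for the Klein and Wiman cases. Recall that the dual Hesse arrangement $\mathcal{H}$ consists of $9$ lines whose only singular points are $12$ triple points, and that it is free with exponents $(d_1,d_2)=(4,4)$ (it is the reflection arrangement of the Hessian pencil). Each line of $\mathcal{H}$ passes through exactly $4$ of the $12$ triple points, since $12\cdot 3/9=4$. This combinatorial data is the input that drives the rest of the argument.

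Next I would analyse what happens when we delete an arbitrary line $\ell\in\mathcal{H}$. By Abe's deletion theorem the arrangement $\mathcal{H}'=\mathcal{H}\setminus\{\ell\}$ is either free or plus--one generated. To decide between these two cases I would compute $r=|\mathcal{H}'\cap\ell|$: each of the $4$ triple points on $\ell$ accounts for exactly $2$ of the remaining $8$ lines, and these account for all the intersections of $\mathcal{H}'$ with $\ell$, so $r=4$. The arrangement $\mathcal{H}'$ has $d=8$, only double and (at most) triple points, hence $m(\mathcal{H}')=3$, and by the bound of Dimca \cite[Theorem 2.1]{Dimca2} we get
$$d_1' \;\geq\; \bigg\lceil\frac{2\cdot 8}{3}-2\bigg\rceil \;=\; 4.$$
Applying the freeness criterion \cite[Theorem 1.3 (3)]{POG}, $\mathcal{H}'$ is free iff $r\geq d_1'+1$; but $r=4<5\leq d_1'+1$, so $\mathcal{H}'$ cannot be free, and therefore must be plus--one generated.

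It then remains to pin down the three exponents $(d_1',d_2',d_3')$ of $\mathcal{H}'$. Since $\mathcal{H}'$ is plus--one generated we have $d_1'+d_2'=d=8$, together with $d_1'\leq d_2'$, which combined with the lower bound $d_1'\geq 4$ forces $d_1'=d_2'=4$. For $d_3'$ I would reuse the same identity employed in the Klein and Wiman proofs, namely $d_3'=\deg(\mathcal{H}')-r=8-4=4$. Thus the exponents are $(4,4,4)$, and since $d_3'=d_2'$ the arrangement $\mathcal{H}'$ is nearly free, which is what we wanted.

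The argument is essentially bookkeeping once the combinatorics of $\mathcal{H}$ is in hand; the only step requiring a moment of care is the verification that all $8$ lines of $\mathcal{H}'$ really meet $\ell$ at one of its $4$ triple points (so that $r=4$ exactly, not larger). Since $4\cdot 2=8$ accounts for every remaining line, this is automatic and no double point on $\ell$ can appear.
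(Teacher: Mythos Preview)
Your argument is correct and follows essentially the same route as the paper: you use Abe's deletion theorem, compute $r=|\mathcal{H}'\cap\ell|=4$, bound $d_1'\geq 4$ via Dimca's estimate, rule out freeness through the criterion in \cite[Theorem 1.3 (3)]{POG}, and read off $d_3'=\deg(\mathcal{H}')-r=4$. The only cosmetic difference is that the paper obtains $d_1'=d_2'=4$ directly from the deletion result (which guarantees $d_1'=d_1$ and $d_2'=d_2$), whereas you deduce it from $d_1'+d_2'=8$ together with $4\leq d_1'\leq d_2'$; both are fine.
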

\begin{proof}
Recall that $\mathcal{H}$ is a free arrangement with exponents $(d_{1}, d_{2}) = (4, 4)$, as it is a reflection arrangement, see \cite[Chapter 6]{OT92}. We know that $t_3(\mathcal{H})=12$, and the arrangement is fully symmetric; that is, on each line of the arrangement there are exactly four triple points. Consider $\mathcal{H}' = \mathcal{H} \setminus \{\ell\}$, where $\ell \in \mathcal{H}$ is any line. Now we are going to use the same trick as above, namely by \cite[Theorem 1.3 (3)]{POG}, $\mathcal{H}'$ is free if and only if $r \geq d_{1}' + 1$, where $d_{1}' = {\rm mdr}(\mathcal{H}')$ and $r = |\mathcal{H}' \cap \ell|$.
Since $\mathcal{H}'$ admits only double and triple points as intersections, then by \cite[Theorem 2.1]{Dimca1} one has
$$d_{1}' \geq \bigg\lceil\frac{2}{3}\cdot {\rm deg}(\mathcal{H}')-2 \bigg\rceil = 4,$$
which implies that
$$4 = r \geq d_{1}' + 1 \geq 4 + 1 = 5,$$
a contradiction. This means that $\mathcal{H}'$ is plus--one generated with exponents $(d_{1}',d_{2}',d_{3}')$, where
$$d_{1} =d_{1}'= 4, d_{2} = d_{2}'= 4, d_{3}' = {\rm deg}(\mathcal{H}')-r = 8 - 4 = 4,$$
hence $\mathcal{H}'$ is nearly free.
\end{proof}
\begin{remark}
\label{pp1}
Now we will demonstrate how to apply our Poincar\'e-type polynomial, given by equation \eqref{combp}, in the context of the deletion technique. Let us focus on the dual Hesse arrangement $\mathcal{H}$ and its deletion $\mathcal{H}'$.
After removing any line, we can easily check that we have
$$t_{2}(\mathcal{H}')=4\quad \text{and}\quad t_{3}(\mathcal{H}')=8.$$
Then
$$\mathcal{P}(\mathcal{H}', h;t) = 1 + 8t + (20-h)t^2$$
with $h \in \{4,5,6\}$. We can check that $\mathcal{P}(\mathcal{H}', h;t)$ splits over the rationals provided that either $h=4$ or $h=5$. Observe that if $\mathcal{P}(\mathcal{H}', 4;t) = (1+4t)^2$, then we would get the exponents $(4,4,4)$, and if $\mathcal{P}(\mathcal{H}', 5;t) = (1+3t)(1+5t)$, then we would get the exponents $(3,5,5)$. However, the latter case is not possible because of \cite[Theorem 1.3 (3)]{POG} since $d_1=d_1'$ and $d_2=d_2'$. This example also explains how to find the exponents of the resulting plus--one generated arrangements combinatorially.
\end{remark}
\section{Minimal plus--one generated sporadic simplicial line arrangements}
In this final section, we present all minimal plus--one generated arrangements among sporadic simplicial line arrangements with up to $27$ lines.
Let us review the basic definitions related to simplicial line arrangements.

\begin{definition}
Let $\mathcal{L} \subset \mathbb{P}^{2}_{\mathbb{R}}$ be an arrangement of $d\geq 3$ lines such that $t_{d} = 0$, i.e., $\mathcal{L}$ is not a pencil of lines. We say that $\mathcal{L}$ is simplicial if the following equality holds:
\begin{equation}
t_{2} = 3 + \sum_{r\geq 4}(r-3)t_{r}
\end{equation}
\end{definition}
 It is worth noting here that this definition strictly follows from Melchior's inequality for line arrangements in the real projective plane \cite{Melchior}.

Many years ago Gr\"unbaum proposed a problem to classify all simplicial line arrangements and, somewhat surprisingly, this classification is still not complete! We know that there are $3$ infinite series and, additionally, there are examples that do not fit into these three series -- such non-fitting examples are commonly known as \text{sporadic} examples. There are about $90$ known such sporadic cases, at least according to the latest update on the subject presented in \cite{Cuntz}. Here, our aim is to detect, among all known sporadic simplicial arrangements of $d\leq 27$ lines, these arrangements that are minimal plus--one generated. Our decision to focus on that class of plus--one generated arrangements follows from the fact that we would like to understand the geometry of such examples and we have a very handy criterion that is presented in Theorem \ref{MPOG}. Our restriction to sporadic line arrangements up to $27$ follows from the fact that it is commonly accepted that their classification has been completed. Let us present our searching strategy to extract those sporadic simplicial line arrangements that are minimal plus--one generated.

First of all, for a given sporadic simplicial line arrangement $\mathcal{A}$ we determine its total Tjurina number, which is combinatorial, namely
\begin{equation}
\tau(\mathcal{L}) = \sum_{r\geq 2}(r-1)^{2}t_{r}(\mathcal{L}).
\end{equation}
Then we want to use directly the criterion given in Theorem \ref{MPOG}, namely for a given line arrangement $\mathcal{L}$ with $d$ lines we want to check whether the equation, with respect to the variable $r$, of the form
\begin{equation}
(d-1)^{2} - r(d-r-1) = \tau(\mathcal{L})+2
\end{equation}
has integer roots $r_{1},r_{2}$ such that one of these roots, say $r_{1}$, satisfies $r_{1} \leq d/2$. This step allows us to narrow down our search considerably. After some additional adjustments, we arrive at exactly $9$ cases, which we now want to examine, each case separately. We refer to Cuntz's classification of sporadic simplicial line arrangements \cite{Cuntz}, and we have adopted the way of listing these arrangements from the mentioned paper. Moreover, we represent the weak combinatorics of a given arrangement $\mathcal{A}(d,k)$ by the vector of the form $W_{d,k} = (d;t_{2}, ..., t_{m(\mathcal{A})})$.

\begin{itemize}
\item[$\mathcal{A}(14,3): $] The weak combinatorics is the following $W_{14,3} = (14;9, 16, 4, 1)$. The minimal free resolution of the Milnor algebra is the following form
\begin{align*}
0\rightarrow S(-22) \rightarrow S^{2}(-20)\oplus S(-21) \rightarrow S^{3}(-13) \rightarrow S,       
\end{align*}
 which means that $\mathcal{A}(14,3)$ is minimal plus--one generated with exponents $(7,7,8)$.

 \item[$\mathcal{A}(15,3): $] The weak combinatorics has the form $W_{15,3} = (15; 12,13,9)$. The minimal free resolution of the Milnor algebra is the following form
\begin{align*}  
0\rightarrow S(-24)\rightarrow S(-21)\oplus S(-22) \oplus S(-23) \rightarrow S^{3}(-14) \rightarrow S,
\end{align*}
 which means that $\mathcal{A}(15,3)$ is minimal plus--one generated with exponents $(7,8,9)$.

\item[$\mathcal{A}(15,5): $] The weak combinatorics has the form $W_{15,5} = (15; 9,22,0,3)$. The minimal free resolution of the Milnor algebra is the following form
\begin{align*}
       0\rightarrow S(-24)\rightarrow S(-21)\oplus S(-22)\oplus S(-23) \rightarrow S^{3}(-14) \rightarrow S,
\end{align*}
 which means that $\mathcal{A}(15,5)$ is minimal plus--one generated with exponents $(7,8,9)$.
 
 \item[$\mathcal{A}(16,7): $] The weak combinatorics has the form $W_{16,7} = (16;12,19,6,0,1)$. The minimal free resolution of the Milnor algebra is the following form
\begin{align*}
       0\rightarrow S(-25)\rightarrow S^{2}(-23)\oplus S(-24) \rightarrow S^{3}(-15) \rightarrow S,
\end{align*}
 which means that $\mathcal{A}(16,7)$ is minimal plus--one generated with exponents $(8,8,9)$.

  \item[$\mathcal{A}(18,6): $] The weak combinatorics has the form $W_{18,6} = (18;18,16,12,0,1)$. The minimal free resolution of the Milnor algebra is the following form
\begin{align*}
       0\rightarrow S(-28)\rightarrow S^{2}(-26)\oplus S(-27) \rightarrow S^{3}(-17) \rightarrow S,
\end{align*}
 which means that $\mathcal{A}(18,6)$ is minimal plus--one generated with exponents $(9,9,10)$.

   \item[$\mathcal{A}(18,8): $] The weak combinatorics has the form $W_{18,8} = (18;16,22,6,2,1)$. The minimal free resolution of the Milnor algebra is the following form
\begin{align*}
       0\rightarrow S(-29)\rightarrow S^{2}(-26)\oplus S(-27) \rightarrow S^{3}(-17) \rightarrow S,
\end{align*}
 which means that $\mathcal{A}(18,8)$ is minimal plus--one generated with exponents $(9,9,10)$.

    \item[$\mathcal{A}(19,7): $] The weak combinatorics has the form $W_{19,7} = (19;21,15,15,0,1)$. The minimal free resolution of the Milnor algebra is the following form
\begin{align*}
       0\rightarrow S(-30)\rightarrow S(-27) \oplus S(-28)\oplus S(-29) \rightarrow S^{3}(-18) \rightarrow S,
\end{align*}
 which means that $\mathcal{A}(19,7)$ is minimal plus--one generated with exponents $(9,10,11)$.

     \item[$\mathcal{A}(24,2): $] The weak combinatorics has the form $W_{24,2} = (24;32,32,0,12,0,0,1)$. The minimal free resolution of the Milnor algebra is the following form
\begin{align*}
       0\rightarrow S(-40)\rightarrow S(-32) \oplus S(-38)\oplus S(-39) \rightarrow S^{3}(-23) \rightarrow S,
\end{align*}
 which means that $\mathcal{A}(24,2)$ is minimal plus--one generated with exponents $(9,15,16)$.

      \item[$\mathcal{A}(24,3): $] The weak combinatorics has the form $W_{24,3} = (24;31,32,9,5,3)$. The minimal free resolution of the Milnor algebra is the following form
\begin{align*}
       0\rightarrow S(-38)\rightarrow S(-34) \oplus S(-36)\oplus S(-37) \rightarrow S^{3}(-23) \rightarrow S,
\end{align*}
 which means that $\mathcal{A}(24,3)$ is minimal plus--one generated with exponents $(11,13,14)$.
\end{itemize}
Summing up, we have the following result.
\begin{theorem}
\label{spog}
Among all known examples of sporadic simplicial arrangements up to $27$ lines from the catalogue presented in \cite{Cuntz}, there are exactly $9$ minimal plus--one generated arrangements, namely these arrangements are $\mathcal{A}(14,3)$, $\mathcal{A}(15,3)$, $\mathcal{A}(15,5)$, $\mathcal{A}(16,7)$, $\mathcal{A}(18,6)$, $\mathcal{A}(18,8)$, $\mathcal{A}(19,7)$, $\mathcal{A}(24,2)$, $\mathcal{A}(24,3)$.
\end{theorem}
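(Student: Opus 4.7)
The plan is to combine the combinatorial characterization from Theorem \ref{MPOG} with a case-by-case sweep of Cuntz's catalogue. For any line arrangement all singularities are quasi--homogeneous, so the total Tjurina number is determined by the weak combinatorics via
$$\tau(\mathcal{L}) = \sum_{r\geq 2}(r-1)^{2} t_{r}(\mathcal{L}).$$
The first step would be to loop through every sporadic simplicial arrangement $\mathcal{A}(d,k)$ with $d \leq 27$ listed in \cite{Cuntz}, read off its weak combinatorics $W_{d,k}=(d;t_{2},\ldots,t_{m(\mathcal{A})})$, and compute $\tau(\mathcal{A})$ from the formula above. For each arrangement I would then test whether the quadratic
$$r^{2} - r(d-1) + (d-1)^{2} - \tau(\mathcal{A}) - 2 = 0$$
admits an integer root $r_{1}\leq d/2$. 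By Theorem \ref{MPOG}, this is a necessary condition for minimal plus--one generatedness, so any arrangement failing the test is ruled out immediately.

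The second step is to refine this numerical filter with two combinatorial safeguards. One must exclude candidates that are in fact free; these can be spotted using either the supersolvability criterion (if $t_{d-1}\neq 0$) or, when $d_{1}'\leq d/2$, by checking whether the Tjurina number saturates Dimca's freeness equality. One must also check the multiplicity bound from Proposition \ref{mm} and the inequality $d/m(\mathcal{L})\leq d_1 \leq d/2$ so that the candidate root $r_{1}$ lies in the admissible range for $\mathrm{mdr}(\mathcal{L})$. After these adjustments the candidate list shrinks to the nine arrangements
$\mathcal{A}(14,3),\mathcal{A}(15,3),\mathcal{A}(15,5),\mathcal{A}(16,7),\mathcal{A}(18,6),\mathcal{A}(18,8),\mathcal{A}(19,7),\mathcal{A}(24,2),\mathcal{A}(24,3)$.

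The third step is confirmatory. For each of the nine survivors I would take the explicit defining equations from \cite{Cuntz}, feed them into \verb}SINGULAR}, and compute the minimal graded free resolution of the corresponding Milnor algebra $M(f)$. It must be checked that the computed resolution has the shape of a $3$--syzygy arrangement with exponents $(d_{1},d_{2},d_{3})$ satisfying $d_{1}+d_{2}=d$ and $d_{3}=d_{2}+1$; the concrete resolutions displayed in the itemized list above are precisely what this computation returns. Only after this last step can Theorem \ref{MPOG} be applied in its iff form, since the numerical filter alone does not certify that the root $r_{1}$ actually equals $\mathrm{mdr}(f)$.

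The main obstacle is this last verification: the exponents of a curve are not combinatorially determined in general (as emphasised in the discussion preceding Theorem \ref{npog} and in \cite{CuntzPokora}), so the existence of an integer root $r_{1}\leq d/2$ in the numerical filter does not by itself force $\mathrm{mdr}(f)=r_{1}$. The analogous issue with the freeness boundary $d_{1}=d/2$ explains why Theorem \ref{MPOG} is stated with the assumption $r\leq d/2$ built in. Hence the argument is unavoidably a hybrid one: a combinatorial sieve that quickly narrows the catalogue to nine candidates, followed by a symbolic computation on explicit equations that pins down $\mathrm{mdr}$ and the full resolution, thereby certifying the claimed minimal plus--one generatedness in each of the nine cases.
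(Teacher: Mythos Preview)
Your proposal is correct and follows essentially the same approach as the paper: a combinatorial sieve based on Theorem~\ref{MPOG} and the Tjurina formula reduces the catalogue to nine candidates, after which the minimal free resolutions are computed in \verb}SINGULAR} from the explicit equations in \cite{Cuntz} to confirm the exponents. Your write-up is in fact more explicit than the paper about the intermediate safeguards (ruling out free arrangements, constraining the admissible range for $\mathrm{mdr}$) that the paper subsumes under the phrase ``after some additional adjustments''; note only that the lower bound on $d_{1}$ you quote should be $\lceil 2d/m(\mathcal{L})-2\rceil$ rather than $d/m(\mathcal{L})$.
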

\begin{remark}
\label{pp2}
Our investigations here can be supported theoretically using our Poincar\'e-type polynomial \eqref{combp}. Let us focus on arrangement $\mathcal{A}(13,3)$ from the catalogue \cite{Cuntz}, we have
$$W_{13,3} = (13; 10, 10, 3, 2).$$
We can compute the associated polynomial, namely
$$\mathcal{P}(\mathcal{A}(13,3),h;t) = 1+ 13t + (47-h)t^2,$$
with
$$4=\bigg\lceil\frac{2}{5}\cdot 13 -2 \bigg\rceil \leq h \leq d-2 = 11.$$
After checking all admissible values of $h$, we arrive at the following triplets of integers
$$(h;d_1, d_2) \in \{(5;6,7),(7;5,8),(11;4,9)\}.$$
Since the exponents are ordered, i.e., $d_{1} \leq d_{2} \leq d_{3}$ , we are left with $(h; d_{1}, d_{2}) = (11; 4, 9)$. To finish our investigation, we only need to determine the first exponent $d_{1}$. Using the defining equation of $\mathcal{A}(13,3)$ from \cite{Cuntz}, we can perform computations with \verb}SINGULAR} and calculate that $d_1=5$. Therefore, the arrangement $\mathcal{A}(13,3)$ cannot be plus--one generated.
\end{remark}
\section*{Acknowledgments}
I would like to thank Piotr Pokora for his guidance and an anonymous referee for useful suggestions that allowed me to improve the paper.

Artur Bromboszcz is supported by the National Science Centre (Poland) Sonata Bis Grant  \textbf{2023/50/E/ST1/00025}. For the purpose of Open Access, the author has applied a CC-BY public copyright license to any Author Accepted Manuscript (AAM) version arising from this submission.

\vskip 0.5 cm

\bigskip
Artur Bromboszcz,
Department of Mathematics,
University of the National Education Commission Krakow,
Podchor\c a\.zych 2,
PL-30-084 Krak\'ow, Poland. \\
\nopagebreak
\textit{E-mail address:} \texttt{artur.bromboszcz@uken.krakow.pl}
\end{document}